\documentclass[12pt,american,reqno]{amsart}

\usepackage{geometry}
\geometry{verbose,tmargin=1in,bmargin=1in,lmargin=1in,rmargin=1in}
\usepackage{amstext}
\usepackage{amsthm}
\usepackage{amssymb}
\usepackage{stmaryrd}
\usepackage{graphicx}
\usepackage{multicol}
\usepackage{framed}

\usepackage{caption}
\captionsetup[table]{position=bottom}

\makeatletter

\usepackage{hyperref}
\hypersetup{
colorlinks=true,
linkcolor=blue,
urlcolor=blue,
citecolor=blue
}

\numberwithin{equation}{section}
\theoremstyle{plain}
\newtheorem{thm}{\protect\theoremname}[section]
\theoremstyle{definition}
\newtheorem{defn}[thm]{\protect\definitionname}
\theoremstyle{definition}
\newtheorem{example}[thm]{\protect\examplename}
\theoremstyle{plain}
\newtheorem{prop}[thm]{\protect\propositionname}
\theoremstyle{remark}
\newtheorem{rem}[thm]{\protect\remarkname}

\usepackage{graphicx,cases,mathtools}
\usepackage{tikz}

\newtheoremstyle{plain}
  {5pt}   
  {5pt plus 1pt minus 1pt}   
  {}  
  {0pt}       
  {\bfseries} 
  {.}         
  {5pt plus 1pt minus 1pt} 
  {}          

\setlength{\jot}{0pt} 


\usepackage{enumitem}
\setlist{itemsep=0pt,topsep=0pt,parsep=1pt,partopsep=0pt}

\makeatother

\usepackage{babel}
\usepackage{listings}
\providecommand{\definitionname}{Definition}
\providecommand{\examplename}{Example}
\providecommand{\propositionname}{Proposition}
\providecommand{\remarkname}{Remark}
\providecommand{\theoremname}{Theorem}

\begin{document}
\global\long\def\Aut{\operatorname{Aut}}%
\global\long\def\pStab{\operatorname{pStab}}%
\global\long\def\Fix{\operatorname{Fix}}%
\global\long\def\span{\operatorname{span}}%

\title[Invariant Polydiagonal Subspaces and Constraint Programming]{Invariant Polydiagonal Subspaces of Matrices and Constraint Programming}
\author{John M. Neuberger, N\'andor Sieben, James W. Swift}

\ifdefined\FirstPage
\thanks{$\dagger$ corresponding author, ORCID 0000-0002-9433-9456}
\else
\curraddr{Northern Arizona University, Department of Mathematics and Statistics, Flagstaff, AZ 86011-5717, USA}

\email{John.Neuberger@nau.edu, Nandor.Sieben@nau.edu, Jim.Wwift@nau.edu}
\fi

\thanks{\today}

\keywords{constraint programming, synchrony, anti-synchrony, invariant polydiagonal subspace, equitable partition}

\subjclass[2010]{90C35, 34C14, 37C80}

\begin{abstract}
In a polydiagonal subspace of the Euclidean space, certain components of the vectors are equal (synchrony) or opposite (anti-synchrony). Polydiagonal subspaces invariant under a matrix have many applications in graph theory and dynamical systems, especially coupled cell networks. We describe invariant polydiagonal subspaces in terms of coloring vectors. This approach gives an easy formulation of a constraint satisfaction problem for finding invariant polydiagonal subspaces. Solving the resulting problem with existing state-of-the-art constraint solvers greatly outperforms the currently known algorithms.
\end{abstract}

\maketitle

\ifdefined\FirstPage
Department of Mathematics and Statistics, Northern Arizona University, 

Flagstaff, AZ 86011-5717, USA

\medskip

Email: John.Neuberger@nau.edu, Nandor.Sieben@nau.edu, Jim.Swift@nau.edu$^\dagger$

\else

\fi

\section{Introduction}
Invariant subspaces of matrices play an important role in many applications. An invariant synchrony subspace is a special kind of invariant subspace whereby certain components of the vectors in the subspace are equal. Invariant synchrony subspaces and their corresponding partitions have a surprisingly large number of applications in graph theory, network theory, mathematical biology, and other areas of mathematics. Applications include equitable and almost equitable partitions of graph vertices, balanced and exo-balanced partitions of coupled cell networks, coset partitions of Cayley graphs, network controllability, and differential
equations, as listed in \cite{NSS7}.

An invariant anti-synchrony subspace is an invariant subspace whereby certain components of the vectors in the subspace are opposite.
Applications can be found in many settings, including multi-agent systems, chaotic oscillators, optically coupled semiconductor lasers, and neural networks, as found in \cite{NiSS}.

Finding all invariant synchrony subspaces is an NP-complete problem.
Algorithms that work for small matrices can be found in \cite{KameiLattice,NSS7,NiSS_companion}. Another approach based on eigenvectors \cite{Aguiar&Dias,AD2021} can potentially analyze much larger matrices, but the algorithm is complex and would be hard to implement on a computer. Furthermore, computing the eigenvalues of large matrices requires inexact floating-point computations. 

Polydiagonal subspaces are either synchrony or anti-synchrony subspaces.
Since there are many more of these subspaces than synchrony subspaces, 
computing all invariant polydiagonal subspaces is much more computationally intensive. This paper presents a constraint programming approach to find all invariant polydiagonal subspaces.
Our code greatly outperforms implementations of the previously listed algorithms for computing synchrony subspaces. This makes it possible to handle much larger matrices. 

This breakthrough is achieved with the help of state-of-the-art constraint solvers \cite{z3, cp_optimizer_2023, cpsatlp}. To use these solvers, the conditions for being both polydiagonal and invariant need to be translated to constraints. These conditions are enforced using coloring vectors, which encode the components that are equal or opposite.

Our main motivation for finding invariant polydiagonal subspaces is taken
from our efforts \cite{NSS3, NSS5, nss8} to find and understand bifurcations of solutions to differential equations. The Bifurcation Lemma for Invariant Subspaces (BLIS) \cite{nss8} assumes the existence of nested invariant subspaces to prove the existence of bifurcating solutions in many cases. The code available at \cite{NSS8_companion} contains an implementation of the corresponding BLIS algorithm which requires the invariant polydiagonal subspaces as input, 
both the 
anomalous invariant subspaces \cite{NSS3}
and 
the ubiquitous, well-understood fixed point subspaces. 
The current paper shows how to compute these invariant subspaces.
\section{Preliminaries}

We recall some terminology from \cite{NiSS}. The definition in \cite{AD2021} is also relevant. This provides the precise mathematical definition of synchrony and anti-synchrony subspaces.
A \emph{partial involution} on a set $X$ is a partial function $x\mapsto x^{*}:\tilde{X}\to\tilde{X}$ that is its own inverse. 
Note that $\tilde{X}\subseteq X$ and $x^{**}=x$. 
A \emph{tagged partition} of a finite set $C$ is a partition $\mathcal{P}$ of $C$ together with a partial involution $P\mapsto  P^{*}:\tilde{\mathcal{P}}\to\tilde{\mathcal{P}}$ on $\mathcal{P}$ that has at most one fixed point. 
The equivalence class of $i\in C$ is denoted by $[i]$, so that $i\in[i]\in\mathcal{P}$. 
The \emph{polydiagonal subspace of a tagged partition} $\mathcal{P}$ of $C=\{1,\ldots,n\}$ is the subspace 
\[
\Delta_{\mathcal{P}}:=\{x\in\mathbb{R}^{n}\mid x_{i}=x_{j}\text{ if }[i]=[j]\text{ and }x_{i}=-x_{j}\text{ if }[i]^{*}=[j]\}
\]
of $\mathbb{R}^{n}$. Note that if $[i]^{*}=[i]$, then $x_{i}=0$
for all $x\in\Delta_{\mathcal{P}}$. A subspace $\Delta$ of $\mathbb{R}^{n}$ is called \emph{polydiagonal} if $\Delta=\Delta_{\mathcal{P}}$ for some tagged partition $\mathcal{P}$. The polydiagonal subspace $\Delta_{\mathcal{P}}$ is called a \emph{synchrony subspace} if the partial involution is the empty function. Otherwise, it is called an \emph{anti-synchrony subspace}.
In the next section, we describe polydiagonal subspaces in terms of coloring vectors.

For a graph with vertex set $\{1,\ldots, n\}$, the \emph{adjacency matrix} is the $n\times n$ matrix $A$ for which $A_{i,j}=1$ if there is an edge between $i$ and $j$ and $A_{i,j}=0$ otherwise. The \emph{Laplacian matrix} of the graph is $L = D-A$, where $D$ is the diagonal matrix consisting of the degrees of the vertices.

\section{Coloring vectors}

Coloring vectors were introduced in \cite{NSS7} to encode partitions.
We generalize this notion for tagged partitions.

\begin{defn}
\label{def:colVec}An element $c$ of $\mathbb{Z}^{n}$ is called
a \emph{coloring vector} if 
\begin{enumerate}
\item $c_{1}\in\{0,1\}$;
\item $-m_{i}\le c_{i}\le1+m_{i}$ for all $i\in\{2,\ldots,n\}$,
\end{enumerate}
where $m_{i}:=\max\{c_{1},\ldots,c_{i-1}\}$.
\end{defn}

In other words, the first non-zero component of a coloring vector
is $1$, and for any $k\ge1$ a component equal to $-k$ or $k+1$
must be preceded by a component equal to $k$.

\begin{rem}
\label{rem:redundant}A simple inductive argument shows that if $c$
is a coloring vector, then
\begin{enumerate}
\item[(1')] $-i+1\le c_{i}\le i$ for all $i$.
\end{enumerate}
On the other hand, this condition guarantees that $c_1\in \{0,1\}$, so it is stronger than Definition~\ref{def:colVec}(1). Hence we can use (1') instead of (1) in Definition~\ref{def:colVec}. The advantage is that (1') provides finite domains for our decision variables in the constraint satisfaction problem shown in Figure~\ref{fig:CSP}.
\end{rem}

\begin{example}
The tuples $(1,0,-1,2,1,2)$ and $(0,0,1,2,3)$ are coloring vectors,
while the tuples $(1,-2,1)$, $(1,0,3)$, and $(0,2,1)$ are not coloring vectors.
\end{example}

The following is easy to verify.
\begin{prop}
\label{def:colVecAlt}The conditions of Definition~\ref{def:colVec}
are equivalent to the following conditions:
\begin{enumerate}
\item $-i+1\le c_{i}\le i$ for all $i$;
\item for all $i\in\{2,\ldots,n\}$ there is a $j\in\{1,\ldots,i-1\}$ such that $c_{i}\le 1+c_j$;
\item for all $i\in\{2,\ldots,n\}$, either $c_{i}\ge0$ or $c_{i}\in\{-c_{1},\ldots,-c_{i-1}\}$.
\end{enumerate}
\end{prop}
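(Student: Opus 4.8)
The plan is to verify the equivalence by matching up the pieces. Condition (1) of the Proposition is exactly condition (1') of Remark~\ref{rem:redundant}, which the remark already identifies as an acceptable substitute for Definition~\ref{def:colVec}(1) (and which forces $c_1\in\{0,1\}$ because $c\in\mathbb{Z}^n$). So, writing $m_i=\max\{c_1,\dots,c_{i-1}\}$ as in Definition~\ref{def:colVec}, it suffices to show that, in the presence of (1), the inequality $-m_i\le c_i\le 1+m_i$ of Definition~\ref{def:colVec}(2) holds for every $i\ge 2$ if and only if (2) and (3) both hold.

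I would treat the upper and lower bounds separately. The upper bound $c_i\le 1+m_i$ is equivalent to (2) index by index: the maximum $m_i$ of the finite nonempty set $\{c_1,\dots,c_{i-1}\}$ is attained by some $c_j$ with $1\le j\le i-1$, so $c_i\le 1+m_i$ yields such a $j$ with $c_i\le 1+c_j$; conversely $c_i\le 1+c_j\le 1+m_i$ for any such $j$. For the implication (3) $\Rightarrow$ ($c_i\ge -m_i$): if $c_i\ge 0$ then $c_i\ge 0\ge -m_i$ since $m_i\ge c_1\ge 0$ by (1); and if $c_i=-c_j$ for some $j\le i-1$ then $c_i=-c_j\ge -m_i$.

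The one step with any content is the reverse implication $(c_i\ge -m_i)\Rightarrow$ (3), equivalently: if $c_i<0$, say $c_i=-k$ with $k\ge 1$, then $c_i\ge -m_i$ forces $m_i\ge k$, and I must upgrade ``some earlier component is $\ge k$'' to ``some earlier component equals $k$''. For this I would prove, by induction on $i$ using only (1) and the upper bound, that the set of positive values occurring among $c_1,\dots,c_{i-1}$ is precisely $\{1,2,\dots,m_i\}$ (empty when $m_i=0$): the inductive step is immediate because passing from $\{c_1,\dots,c_{i-1}\}$ to $\{c_1,\dots,c_i\}$ either leaves $m_i$ unchanged and adds a nonpositive value or a value already in $\{1,\dots,m_i\}$, or, by the upper bound, adds exactly the value $m_i+1$ and bumps the maximum to $m_i+1$. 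Granting this, $1\le k\le m_i$ puts $k$ in this set, so $c_j=k=-c_i$ for some $j\le i-1$, i.e., $c_i\in\{-c_1,\dots,-c_{i-1}\}$, which is (3).

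Assembling: Definition~\ref{def:colVec} $\Rightarrow$ (1),(2),(3) comes from Remark~\ref{rem:redundant} for (1), the upper-bound argument for (2), and the ``initial segment of positive values'' claim for (3); conversely (1),(2),(3) $\Rightarrow$ Definition~\ref{def:colVec} recovers $c_1\in\{0,1\}$ from (1), the upper bound from (2), and the lower bound from (3) together with (1) as above. I expect the induction in the previous paragraph to be the only part that is not pure bookkeeping with the definition of $m_i$, and even there the verification is short.
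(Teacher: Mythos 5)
The paper offers no proof of this proposition (it is prefaced only by ``The following is easy to verify''), so there is no authorial argument to compare against. On its own merits, your proof is correct and well organized. The reduction of the equivalence to matching the upper bound of Definition~\ref{def:colVec}(2) against condition (2) and the lower bound against condition (3), in the presence of (1), is exactly the right decomposition. The one non-trivial step you flag --- that $c_i\ge -m_i$ together with $c_i<0$ forces $c_i$ to be the negative of some earlier component --- is handled cleanly by your inductive claim that the positive values among $c_1,\dots,c_{i-1}$ form the initial segment $\{1,\dots,m_i\}$; that induction uses only $c_1\ge 0$ (from (1)) and the upper bound, so it is available in both directions of the equivalence, and the three cases in the inductive step (add a nonpositive value, add a repeated positive value, or add exactly $m_i+1$) are exhaustive precisely because of the upper bound. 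The converse direction (3)\,$\Rightarrow c_i\ge -m_i$ is also correct, including the small point that $m_i\ge c_1\ge 0$ is needed to conclude $c_i\ge 0\ge -m_i$. No gaps.
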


\begin{defn}
Given a coloring vector $c\in\mathbb{Z}^{n}$, let $[i]:=\{j\mid c_{i}=c_{j}\}$,
$\mathcal{P}_{c}:=\{[i]\mid i\in\{1,\ldots,n\}\}$, and $[i]^{*}:=\{j\mid c_{j}=-c_{i}\}$.
\end{defn}

\begin{prop}
If $c$ is a coloring vector in $\mathbb{Z}^{n}$, then $\mathcal{P}_{c}$
endowed with the partial involution $[i]\mapsto[i]^{*}:\tilde{\mathcal{P}}\to\tilde{\mathcal{P}}$
for $\tilde{\mathcal{P}}=\{[i]\mid[i]^{*}\ne\emptyset\}$ is a tagged
partition of $C=\{1,\ldots,n\}$.
\end{prop}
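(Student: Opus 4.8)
The plan is to verify directly that the three data — the partition $\mathcal{P}_c$, the subset $\tilde{\mathcal{P}}$, and the map $[i]\mapsto[i]^*$ — satisfy the definition of a tagged partition. First I would check that $\mathcal{P}_c$ really is a partition of $C$: since $[i]$ is the fiber of the map $i\mapsto c_i$ through $i$, the sets $[i]$ are nonempty (they contain $i$), they cover $C$, and two of them are either equal or disjoint according to whether $c_i=c_j$. This is immediate from the definition and uses nothing about coloring vectors.

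Next I would make sense of the proposed partial involution. The key point is that $[i]^*=\{j\mid c_j=-c_i\}$ depends only on the block $[i]$ and not on the representative $i$: if $[i]=[i']$ then $c_i=c_{i'}$, so $[i]^*=[i']^*$. When $[i]^*$ is nonempty it equals $[j]$ for any $j$ with $c_j=-c_i$, so it is indeed a block of $\mathcal{P}_c$, i.e. the map $\tilde{\mathcal{P}}\to\mathcal{P}$ lands in $\mathcal{P}$. To see it lands in $\tilde{\mathcal{P}}$, observe that if $[i]^*=[j]$ then $[j]^*=\{k\mid c_k=-c_j\}=\{k\mid c_k=c_i\}=[i]$, which is nonempty; this simultaneously shows $[j]\in\tilde{\mathcal{P}}$ and that the map is its own inverse, $([i]^*)^*=[i]$. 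So $[i]\mapsto[i]^*$ is a genuine partial involution on $\mathcal{P}_c$.

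The one remaining condition, and the only place where the coloring-vector hypothesis is actually needed, is that the partial involution has \emph{at most one} fixed point. A block $[i]$ is a fixed point exactly when $[i]=[i]^*$, i.e. when $c_i=-c_i$, i.e. $c_i=0$. So the fixed points are precisely the blocks on which $c$ vanishes, and there is at most one such block (namely $\{j\mid c_j=0\}$, if it is nonempty). Thus there is at most — in fact exactly zero or one — fixed point. I expect this to be the main (though still very short) obstacle only in the sense that it is the step that genuinely uses something: one must notice that "fixed point of the involution" translates to "$c_i=0$" and that all zero components lie in a single block by construction. Assembling these observations gives the claim; I would present it as a short sequence of verifications rather than a long argument.
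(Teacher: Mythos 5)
Your proof is correct and follows the same route as the paper's (which simply states that the partition and partial-involution facts are easy to see and then observes that the only possible fixed point is the zero block); you have merely fleshed out the details. One small remark: you claim the coloring-vector hypothesis is "actually needed" for the at-most-one-fixed-point step, but your own argument there works for any $c \in \mathbb{Z}^n$ — fixed points correspond to $c_i = 0$ regardless — so the proposition in fact holds for arbitrary integer vectors; the coloring-vector condition only becomes essential for the bijectivity in Proposition~\ref{prop:c_to_Pc}.
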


\begin{proof}
It is easy to see that we have a partial involution on a partition
of $C$. If $c_{i}\ne0$ then $[i]$ is not a fixed point of this
partial involution. So the only possible fixed point is $\{i\mid c_{i}=0\}$
when this set is non-empty.
\end{proof}

\begin{prop} \label{prop:c_to_Pc}
The map $c\mapsto\mathcal{P}_{c}$ is a bijection
between the set of coloring vectors of $\mathbb{Z}^{n}$ and the set
of tagged partitions of $C=\{1,\ldots,n\}$.
\end{prop}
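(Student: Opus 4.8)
The plan is to exhibit an explicit two-sided inverse to the map $c\mapsto\mathcal{P}_c$, thereby proving it is a bijection. Given a tagged partition $\mathcal{P}$ of $C=\{1,\ldots,n\}$, I would construct a canonical coloring vector $c=c(\mathcal{P})$ by scanning the indices $1,2,\ldots,n$ in order and assigning colors greedily, then verify that $c$ is a coloring vector in the sense of Definition~\ref{def:colVec} and that $\mathcal{P}_c=\mathcal{P}$, and finally that $c(\mathcal{P}_c)=c$ for every coloring vector $c$.

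The construction of $c(\mathcal{P})$ goes as follows. If the unique possible fixed point of the involution exists, call that block $P_0$; assign $c_i=0$ for all $i\in P_0$ (if no fixed block exists, no index gets color $0$). For the remaining blocks, process indices in increasing order: when the scan reaches an index $i$ whose block $P=[i]$ has not yet been colored and $P$ is not the fixed block, give $P$ the next unused positive color $k$, and simultaneously, if $P^*$ is defined and distinct from $P$ and not yet colored, give $P^*$ the color $-k$. (If $P^*$ was already colored $-k'$ for some earlier $k'$, consistency of the involution guarantees $P$ receives $k'$ — but in fact by processing in order the paired block is always colored at the same moment as its partner, so $P$ gets the fresh color and $P^*$ gets its negative.) One checks that $c_1\in\{0,1\}$ since index $1$ either lies in the fixed block (color $0$) or starts the first positive block (color $1$); and the bound $-m_i\le c_i\le 1+m_i$ holds because a new positive color $k+1$ is only introduced after color $k$ has appeared, and a negative color $-k$ is introduced exactly when $k$ is, so $-k$ is preceded by $k$. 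Thus $c(\mathcal{P})$ is a coloring vector, and by construction two indices get the same color iff they lie in the same block and opposite nonzero colors iff their blocks are swapped by the involution, so $\mathcal{P}_{c(\mathcal{P})}=\mathcal{P}$.

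For the other direction, given a coloring vector $c$, I must show $c(\mathcal{P}_c)=c$. Here the key observation is that the defining inequalities of Definition~\ref{def:colVec} force the colors to appear in exactly the canonical order produced by the greedy scan: the first nonzero color encountered (reading left to right) is forced to be $1$, the next new positive value is forced to be $2$, and each negative value $-k$ can only occur after $k$ has been seen, so it is the negative of an already-established color. An induction on $i$ shows that the color $c_i$ assigned to index $i$ by $c$ agrees with the color that the reconstruction procedure assigns to the block $[i]\in\mathcal{P}_c$: new positive colors are numbered in order of first appearance in both, zeros coincide with the fixed block in both, and negatives are determined by the involution in both. Hence $c(\mathcal{P}_c)=c$, completing the proof that the two maps are mutually inverse.

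The main obstacle is bookkeeping rather than conceptual: one must carefully argue that the greedy assignment of the paired block $P^*$ never conflicts with a color already assigned, which relies on the involution property $P^{**}=P$ and on the "at most one fixed point" clause to rule out the degenerate case where a block would need color $0$ and a nonzero color simultaneously. Making the induction in the second half fully rigorous — precisely tracking that "next unused positive color" in the reconstruction matches "the value $c_i$ takes" — is the step that needs the most care, and it is exactly where conditions (1) and (2) of Definition~\ref{def:colVec} are used in an essential way.
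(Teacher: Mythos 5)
Your proposal is correct and takes essentially the same approach as the paper: the paper constructs the inverse set-theoretically (defining $\mathcal{Q}$ as the blocks whose involution partner has a smaller-or-equal minimum, ordering $\mathcal{P}\setminus\mathcal{Q}$ by minimum element to get the positively-labeled blocks $A_1,\ldots,A_d$, and setting $A_{-k}:=A_k^*$), which is exactly the greedy scan you describe. The paper dismisses injectivity as ``clearly injective'' and leaves the verification that the construction yields a genuine coloring vector to the reader, while you spell out $c(\mathcal{P}_c)=c$ as the second half of a two-sided inverse argument, but the underlying construction and bookkeeping are the same.
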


\begin{proof}
The map is clearly injective. To show that the map is also surjective,
consider a tagged partition $\mathcal{P}$ of $C$. Let $A_{0}$ be
the fixed point of the partial involution if it exists. Let $\mathcal{Q}:=\{A\in\tilde{\mathcal{P}}\mid\min(A^{*})\leq\min(A)\}$.
Note that $A_{0}\in\mathcal{Q}$ if $A_{0}$ exists. Let $(A_{1},\ldots,A_{d})$
be the ordering of the classes in $\mathcal{P\setminus\mathcal{Q}}$
according to the natural order of their minimum elements. That is,
$\min(A_{r})<\min(A_{s})$ for all $0<r<s$. Let $A_{-k}:=A_{k}^{*}$
if $A_{k}^{*}\in\mathcal{Q}$. Now every element in $\mathcal{P}$
has a unique label. Define $c\in\mathbb{Z}^{n}$ by $c_{i}=k$ if
$[i]=A_{k}$. The construction guarantees that $c$ is a coloring
vector with $\mathcal{P}_{c}=\mathcal{P}$.
\end{proof}

This proposition also implies that there is a bijection between coloring vectors in $\mathbb{Z}^n$ and polydiagonal subspaces of $\mathbb{R}^n$, given by
$$
\Delta_{{\mathcal P}_c} = \{x \in \mathbb{R}^n \mid x_i = x_j \text{ if } c_i = c_j \text{ and } x_i = -x_j \text{ if } c_i = - c_j\}.
$$

\begin{example}
\label{ex:cPbijection}
Let the partition $\mathcal{P}=\{\{1,4\},\{2\},\{3\},\{5\}\}$ be
tagged with the partial involution on $\mathcal{\tilde{P}}=\{\{1,4\},\{2\},\{5\}\}$
defined by $\{1,4\}^{*}=\{5\}$, $\{5\}^{*}=\{1,4\}$, and $\{2\}^{*}=\{2\}$.
Thus $A_{0}=\{2\}$ and $\mathcal{Q}=\{\{2\},\{5\}\}$, so $A_{1}=\{1,4\}$
and $A_{2}=\{3\}$ are the ordered elements of $\mathcal{P}\setminus\mathcal{Q}$.
Then $A_{-1}=A_{1}^{*}=\{5\}$, and the corresponding coloring vector
is $c=(1,0,2,1,-1)$. Note that
$$
\Delta_{{\mathcal P}} = \{x \in \mathbb{R}^5 \mid x_1= x_4 = -x_5, \, x_2 = 0\} = \{(a, 0, b, a, -a) \mid a,b \in \mathbb{R}\}.
$$
is the corresponding polydiagonal subspace of $\mathbb{R}^5$.
\end{example}

The coloring vectors of synchrony subspaces are characterized by the following result.

\begin{prop}
The vector $c\in\mathbb{Z}^{n}$ is a coloring vector for a partition
with an empty partial involution if and only if 
\begin{enumerate}
\item $c_{1}=1$;
\item $1\le c_{i}\le1+m_{i}$ for all $i\in\{2,\ldots,n\}$,
\end{enumerate}
where $m_{i}:=\max\{c_{1},\ldots,c_{i-1}\}$.
\end{prop}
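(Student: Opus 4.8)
The plan is to characterize exactly when the partial involution associated to a coloring vector $c$ is the empty function, and then to show that this happens precisely when conditions (1) and (2) of the proposition hold. Recall from the definition of $\mathcal{P}_c$ that $[i]^* = \{j \mid c_j = -c_i\}$, and the partial involution is defined on $\tilde{\mathcal{P}} = \{[i] \mid [i]^* \neq \emptyset\}$. So the partial involution is empty exactly when $[i]^* = \emptyset$ for every $i$, i.e. when there is no $i$ with $-c_i$ appearing as some $c_j$.

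First I would observe that since $c$ is a coloring vector, $c_1 \in \{0,1\}$ by Definition~\ref{def:colVec}(1). If $c_1 = 0$, then $0 = -c_1$ appears in $c$, so $[1]^* = [1] \neq \emptyset$ and the partial involution is nonempty (it has $[1]$ as a fixed point). Hence a necessary condition for the involution to be empty is $c_1 = 1$, which is condition (1). Conversely, assume $c_1 = 1$. I would then argue by induction on $i$ that all components of a coloring vector are positive if and only if no negative value is ever introduced: by Definition~\ref{def:colVec}(2), $c_i \geq -m_i$, and a component can only be negative if it equals $-k$ for some $k = c_j$ with $j < i$ and $k \geq 1$. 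So the coloring vector has all positive entries iff condition (2), $1 \le c_i \le 1 + m_i$, holds for all $i \ge 2$ (the upper bound being automatic from Definition~\ref{def:colVec}(2), the lower bound $c_i \ge 1$ being the substantive new requirement).

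It then remains to connect ``all entries of $c$ are positive'' with ``the partial involution is empty.'' If all entries are positive, then for any $i$ we have $-c_i < 0$, so $-c_i$ is not among the (positive) values $c_1, \ldots, c_n$, hence $[i]^* = \emptyset$ for all $i$ and the involution is empty. Conversely, if some entry is nonpositive, then either some $c_i = 0$ (giving a fixed point as above) or some $c_i < 0$; in the latter case, because $c$ is a coloring vector, Proposition~\ref{def:colVecAlt}(3) forces $c_i \in \{-c_1, \ldots, -c_{i-1}\}$, so $-c_i = c_j$ for some $j$, meaning $[i]^* \neq \emptyset$ and the involution is nonempty. Combining these equivalences gives the result.

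The only mildly delicate point is making the inductive bookkeeping precise: one must check that under $c_1 = 1$ and $c_i \ge 1$ for all $i$, the constraint $c_i \le 1 + m_i$ from Definition~\ref{def:colVec}(2) reduces to exactly condition (2) as stated, and that the lower bound $-m_i \le c_i$ becomes vacuous. This is routine since $m_i \ge 1$ once $c_1 = 1$, so $-m_i \le 0 < 1 \le c_i$ automatically. I expect no real obstacle here; the proof is essentially an unwinding of definitions together with one application of Proposition~\ref{def:colVecAlt}(3).
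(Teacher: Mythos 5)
The paper states this proposition without proof, so there is no ``paper's approach'' to compare against; I have therefore checked your argument on its own merits, and it is correct. Your decomposition is sound: (i) the involution is empty iff $[i]^* = \emptyset$ for every $i$, i.e.\ iff $-c_i$ never occurs among the entries; (ii) $c_1 = 0$ forces $[1]^* \ne \emptyset$, so $c_1 = 1$ is necessary, and a nonpositive $c_i$ for $i\ge 2$ forces $[i]^* \ne \emptyset$ either trivially ($c_i = 0$) or via Proposition~\ref{def:colVecAlt}(3) ($c_i < 0$ implies $c_i \in \{-c_1,\ldots,-c_{i-1}\}$, so $-c_i = c_j$ for some $j$); (iii) conversely, if all entries are $\ge 1$ then $-c_i < 0$ never equals any $c_j$. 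The bookkeeping reduction at the end, showing that under $c_1 = 1$ the constraint $-m_i \le c_i \le 1+m_i$ of Definition~\ref{def:colVec}(2) together with $c_i \ge 1$ is equivalent to condition (2) of the proposition (and that $-m_i \le c_i$ becomes vacuous since $m_i \ge 1$), is accurate. One minor presentational point: the phrase ``I would argue by induction on $i$'' is not cashed out as an actual induction; you replace it with the direct appeal to Proposition~\ref{def:colVecAlt}(3), which is cleaner and sufficient, so you could simply drop the mention of induction. The appeal to Proposition~\ref{def:colVecAlt}(3) is the right move here --- it saves you from re-deriving the fact that a component $-k$ with $k \ge 1$ must be preceded by a component equal to $k$, which would otherwise require a small minimality argument from Definition~\ref{def:colVec}(2) alone.
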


\begin{example}
\label{ex:cSynchrony}
Let the partition $\mathcal{P}=\{\{1,4\},\{2,3\},\{5\}\}$ have the
empty involution, so $\tilde{P}=\emptyset$ and $\mathcal{Q}=\emptyset$
in the notation of Proposition~\ref{prop:c_to_Pc}. The classes of
the partition are listed with increasing minimal elements, so the
corresponding coloring vector is $c=(1,2,2,1,3)$.
Note that
$$\Delta_{{\mathcal P}}= \{x \in \mathbb{R}^5 \mid x_1 = x_4, x_2 = x_3 \} = \{(a, b,b,a,c) \mid a,b,c \in \mathbb{R}\}$$
is the corresponding synchrony subspace of $\mathbb{R}^5$.
\end{example}

It is easy to distinguish synchrony and anti-synchrony subspaces by the corresponding coloring vectors. 
If all the components of the coloring vector $c$ are positive, as in Example~\ref{ex:cSynchrony}, then $\Delta_{{\mathcal P}_c}$ is a synchrony subspace. If one or more component of $c$ is less than $1$, as in Example~\ref{ex:cPbijection}, then $\Delta_{{\mathcal P}_c}$ is an anti-synchrony subspace.

The following result is an easy consequence of the definitions.
\begin{prop}
\label{prop:inv}An element $v\in\mathbb{R}^{n}$ is in $\Delta_{\mathcal{P}_{c}}$
if and only if the conditions
\begin{enumerate}
\item $c_{i}=c_{j}$ implies $v_{i}=v_{j}$;
\item $c_{i}=-c_{j}$ implies $v_{i}=-v_{j}$;
\item $c_{i}=0$ implies $v_{i}=0$
\end{enumerate}
hold for all distinct $i$ and $j$.
\end{prop}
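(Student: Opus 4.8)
The plan is to unwind the definition of $\Delta_{\mathcal{P}_c}$ clause by clause and match it against conditions (1)--(3). The underlying dictionary is elementary: for any indices $i,j$ we have $[i]=[j]$ if and only if $c_i=c_j$ (since $i\in[i]=\{k\mid c_k=c_i\}$), and, whenever the set $[i]^{*}=\{k\mid c_k=-c_i\}$ is nonempty, $[i]^{*}=[j]$ if and only if $c_j=-c_i$ (since $j\in[j]$). In particular, $c_i=0$ forces $[i]^{*}=\{k\mid c_k=0\}=[i]$, so $[i]$ is the fixed point of the partial involution on $\mathcal{P}_c$; by the remark following the definition of $\Delta_{\mathcal{P}}$, this means $x_i=0$ for every $x\in\Delta_{\mathcal{P}_c}$.

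For the forward implication, suppose $v\in\Delta_{\mathcal{P}_c}$. If $c_i=c_j$ then $[i]=[j]$, and the defining condition gives $v_i=v_j$, which is (1). If $c_i=0$ then $[i]^{*}=[i]$ is the fixed point, so $v_i=0$, which is (3). If $c_i=-c_j$ with $c_i\ne 0$, then $[j]=[i]^{*}\ne\emptyset$, so $[i]\in\tilde{\mathcal{P}}$ and the defining condition gives $v_i=-v_j$; if instead $c_i=-c_j$ with $c_i=0$, then $c_j=0$ as well, so $v_i=0=-v_j$ by (3). In all cases (2) holds.

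For the converse, suppose $v$ satisfies (1)--(3), and verify the two parts of the definition of $\Delta_{\mathcal{P}_c}$. If $[i]=[j]$ then $c_i=c_j$, so (1) gives $v_i=v_j$. If $[i]\in\tilde{\mathcal{P}}$ and $[i]^{*}=[j]$, then $c_j=-c_i$; when $c_i\ne 0$, condition (2) gives $v_i=-v_j$, and when $c_i=0$ we have $c_j=0$ too, so (3) gives $v_i=0=-v_j$. Hence $v\in\Delta_{\mathcal{P}_c}$.

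There is no real difficulty in this argument; the only point that deserves attention is the fixed-point case $c_i=0$, where conditions (2) and (3) overlap and where the clause ``$x_i=-x_j$ if $[i]^{*}=[j]$'' must be read as also covering indices $j$ lying in the same class as $i$ (including $j=i$). Once this bookkeeping is kept straight, the equivalence follows directly from the dictionary above.
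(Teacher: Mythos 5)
Your proof is correct. The paper provides no proof at all for this proposition, merely labeling it ``an easy consequence of the definitions''; your argument is exactly the standard unwinding of the definition of $\Delta_{\mathcal{P}_c}$ via the dictionary $[i]=[j]\Leftrightarrow c_i=c_j$ and $[i]^*=[j]\Leftrightarrow c_j=-c_i$ that the authors evidently had in mind, and you correctly handle the only subtlety, namely that $c_i=0$ forces $[i]^*=[i]$ so that conditions (2) and (3) overlap on the zero class.
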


Let $c$ be a coloring vector in $\mathbb{Z}^{n}$. The \emph{set
of colors} of $c$ is $K_{c}:=\{|c_{i}|:1\le i\le n\}$. The \emph{signed
Kronecker delta function} is defined by
\[
\delta_{i}(j):=\begin{cases}
1, & i=j\\
-1, & i=-j\\
0, & \text{otherwise}.
\end{cases}
\]

\begin{prop}
\label{prop:basis}
Let $c$ be a coloring vector in $\mathbb{Z}^{n}$. Define $b^{(k)}:=(\delta_{k}(c_{1}),\ldots,\delta_{k}(c_{n}))$.
Then $B_{c}=\{b^{(k)}\mid k\in K_{c}\}$ is a basis for the polydiagonal subspace $\Delta_{\mathcal{P}_{c}}$.
\end{prop}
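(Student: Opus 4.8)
The plan is to verify directly the three requirements for a basis: that each $b^{(k)}$ belongs to $\Delta_{\mathcal{P}_c}$, that the $b^{(k)}$ together span $\Delta_{\mathcal{P}_c}$, and that they are linearly independent. The one algebraic fact I would isolate at the outset is that for $k \neq 0$ the signed Kronecker delta is an odd function of its argument, $\delta_k(-j) = -\delta_k(j)$, and that $\delta_k(0) = 0$. The color $k = 0$ occurs precisely when some component of $c$ vanishes, and then $b^{(0)}$ is the zero vector; so one reads $B_c$ as indexed by the nonzero colors (equivalently, discards the degenerate $b^{(0)}$), which is what makes the statement literally correct.

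\textbf{Membership.} Fix $k \in K_c$ with $k > 0$ and check the three implications of Proposition~\ref{prop:inv} for $v = b^{(k)}$. If $c_i = c_j$ then $\delta_k(c_i) = \delta_k(c_j)$, giving (1); if $c_i = -c_j$ then $\delta_k(c_i) = \delta_k(-c_j) = -\delta_k(c_j)$ by oddness, giving (2); and if $c_i = 0$ then $\delta_k(c_i) = 0$, giving (3). Hence $b^{(k)} \in \Delta_{\mathcal{P}_c}$.

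\textbf{Spanning.} For each nonzero $k \in K_c$ the coloring-vector axioms (Remark~\ref{rem:redundant}, or Proposition~\ref{def:colVecAlt}) supply an index $i(k)$ with $c_{i(k)} = k$: a component equal to $-k$ must be preceded by a component equal to $k$, so the color $k$ itself, not merely $-k$, is actually attained. Given $v \in \Delta_{\mathcal{P}_c}$, I would show $v = \sum_{k} v_{i(k)}\, b^{(k)}$ componentwise. At position $l$, $\delta_k(c_l) \neq 0$ only for $k = |c_l|$, so if $c_l \neq 0$ the right-hand side at $l$ equals $v_{i(|c_l|)}\,\delta_{|c_l|}(c_l)$, where $\delta_{|c_l|}(c_l)$ is $+1$ or $-1$ according to the sign of $c_l$; since $c_{i(|c_l|)} = |c_l| = \pm c_l$, parts (1) and (2) of Proposition~\ref{prop:inv} applied to $v$ give $v_{i(|c_l|)} = \pm v_l$ with matching sign, and the two sign factors cancel to yield $v_l$. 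If $c_l = 0$ the right-hand side at $l$ is $0$, which equals $v_l$ by Proposition~\ref{prop:inv}(3).

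\textbf{Linear independence, and the main difficulty.} Suppose $\sum_k \lambda_k b^{(k)} = 0$. Evaluating at $i(k_0)$ annihilates every term with $k \neq k_0$, because $\delta_k(k_0) = 0$ for distinct positive $k, k_0$, leaving $\lambda_{k_0} \delta_{k_0}(k_0) = \lambda_{k_0} = 0$; since $k_0$ ranges over all nonzero colors, $B_c$ is independent. I do not anticipate a genuine obstacle: the proof is bookkeeping, and the only places demanding attention are the sign cancellations in the spanning step (resting entirely on the oddness of $\delta_k$ and on Proposition~\ref{prop:inv}) and the handling of the color $0$. A slicker packaging would prove membership and independence as above and then deduce spanning from a dimension count, after checking separately that $\dim \Delta_{\mathcal{P}_c}$ equals the number of nonzero colors; this, however, essentially reproduces the same case analysis.
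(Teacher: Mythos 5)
The paper states this proposition without proof, so there is no paper argument to compare against; your three-step verification (membership via Proposition~\ref{prop:inv}, spanning by the explicit expansion $v=\sum_k v_{i(k)}b^{(k)}$, independence by evaluating at $i(k_0)$) is the natural one and it is correct. The oddness $\delta_k(-j)=-\delta_k(j)$ for $k\neq0$ and the existence of an index with $c_{i(k)}=k$ (not merely $-k$) are exactly the two points that need isolating, and you handle both.

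One detail is off, though it does not damage the proof: you assert that when $0\in K_c$ the vector $b^{(0)}$ is the zero vector. Under the paper's definition of the signed Kronecker delta the first case applies when $i=j=0$, so $\delta_0(0)=1$; hence if $c_i=0$ for some $i$ then $b^{(0)}$ has a $1$ in that position, is nonzero, and in fact fails the membership test $b^{(0)}\in\Delta_{\mathcal{P}_c}$ (which forces $x_i=0$ there). So the situation is slightly worse than "degenerate but harmless": including $b^{(0)}$ would put a vector outside the subspace into $B_c$ and make the statement false. The correct reading is exactly the one you then adopt anyway, that $K_c$ consists of the \emph{nonzero} values $|c_i|$ (this also matches the paper's own example, where $K_c=\{1,2\}$ for $c=(1,0,1,2,-2)$ despite $c_2=0$, and its claim $d=|K_c|=\dim\Delta_{\mathcal{P}_c}$). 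Your observation that the proposition needs this convention to be literally true is a genuine and worthwhile point; only the stated reason ($b^{(0)}=0$) should be corrected to ($b^{(0)}\notin\Delta_{\mathcal{P}_c}$).
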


Note that $d=|K_{c}|$ is the dimension of $\Delta_{\mathcal{P}_{c}}$ and $b^{(k)}$ is the zero vector for $k\in \mathbb{N}\setminus K_c$.
The matrix $D_{c}:=\left[\begin{matrix}b^{(1)} & \cdots & b^{(d)}\end{matrix}\right]\in\{-1,0,1\}^{n\times d}$
built from the basis vectors is what \cite[Def. 3.6]{nss8} calls
a \emph{polydiagonal basis matrix}. Every row of $D_{c}$ has at most
one non-zero entry and $D_{c}^{T}$ is in reduced row-echelon form.
Given a polydiagonal basis matrix $D_{c}$, the corresponding coloring
vector can be recovered as the matrix product $c=D_{c}\iota$ with
$\iota=(1,2,\ldots,d)$. 

\begin{example}
Consider the coloring vector $c=(1,0,1,2,-2)\in\mathbb{Z}^{5}$. Then
$K_{c}=\{1,2\}$ and $B_{c}=\{(1,0,1,0,0),(0,0,0,1,-1)\}$. So the
transpose of the corresponding polydiagonal basis matrix is 
\[
D_{c}^{T}=\left[\begin{matrix}1 & 0 & 1 & 0 & 0\\
0 & 0 & 0 & 1 & -1
\end{matrix}\right]\in\{-1,0,1\}^{2\times5}.
\]
Note that $c=D_{c}\left[\begin{smallmatrix}1\\
2
\end{smallmatrix}\right]$. The corresponding tagged partition is $\mathcal{P}_{c}=\{\{1,3\},\{2\},\{4\},\{5\}\}$
with $\{2\}^{*}=\{2\}$ and $\{4\}^{*}=\{5\}$. The corresponding
polydiagonal subspace is $\Delta_{\mathcal{P}_{c}}=\span(B_{c})=\{(a,0,a,b,-b)\mid a,b\in\mathbb{R}\}\in\mathbb{R}^{5}$.
\end{example}

\begin{rem}
Recall \cite{NiSS} that a tagged partition $\mathcal{P}$ is evenly
tagged if the partial involution is fully defined and $|A|=|A^{*}|$
for all $A\in\mathcal{P}$. It is easy to see that $\mathcal{P}_{c}$
is evenly tagged if and only if $\sum_{i}b_{i}=0$ for all $b\in B_{c}$.
\end{rem}

\section{Invariant polydiagonal subspaces}

A subspace $V$ of $\mathbb{R}^{n}$ is \emph{invariant} under the matrix
$M\in\mathbb{R}^{n\times n}$ if $MV\subseteq V$. The following is
an easy consequence of Proposition~\ref{prop:inv}.

\begin{example}
The partitions corresponding to synchrony subspaces that are invariant
under the adjacency matrix of a graph are exactly the equitable partitions.
\end{example}

\begin{prop}
\label{prop:invMat}The polydiagonal subspace $\Delta_{\mathcal{P}_{c}}$\textup{
of $\mathbb{R}^{n}$} is invariant under $M\in\mathbb{R}^{n\times n}$
if and only if for all $k\in K_{c}$ the conditions
\begin{enumerate}
\item $c_{i}=c_{j}$ implies $w_{i}^{(k)}=w_{j}^{(k)}$;
\item $c_{i}=-c_{j}$ implies $w_{i}^{(k)}=-w_{j}^{(k)}$;
\item $c_{i}=0$ implies $w_{i}^{(k)}=0$
\end{enumerate}
\noindent hold for all distinct $i$ and $j$, where $w^{(k)}:=Mb^{(k)}$. 
\end{prop}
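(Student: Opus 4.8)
The plan is to unwind both directions from the definition $\Delta_{\mathcal{P}_c}$ is invariant under $M$ iff $M\Delta_{\mathcal{P}_c}\subseteq\Delta_{\mathcal{P}_c}$, using the basis $B_c=\{b^{(k)}\mid k\in K_c\}$ from Proposition~\ref{prop:basis}. The key observation is that a subspace $V=\operatorname{span}(B_c)$ is invariant under $M$ if and only if $Mb^{(k)}\in V$ for every basis vector $b^{(k)}$, since $M$ is linear: one direction is trivial, and the other follows because every $v\in V$ is a linear combination of the $b^{(k)}$, so $Mv$ is the same linear combination of the $Mb^{(k)}\in V$. Thus the condition ``$\Delta_{\mathcal{P}_c}$ invariant under $M$'' reduces to ``$w^{(k)}:=Mb^{(k)}\in\Delta_{\mathcal{P}_c}$ for all $k\in K_c$.''

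The second step is to translate membership $w^{(k)}\in\Delta_{\mathcal{P}_c}$ into the three numbered conditions. This is exactly Proposition~\ref{prop:inv} applied to the vector $v=w^{(k)}$: $w^{(k)}\in\Delta_{\mathcal{P}_c}$ iff $c_i=c_j\Rightarrow w^{(k)}_i=w^{(k)}_j$, and $c_i=-c_j\Rightarrow w^{(k)}_i=-w^{(k)}_j$, and $c_i=0\Rightarrow w^{(k)}_i=0$, for all distinct $i,j$. Conjoining over all $k\in K_c$ gives precisely the stated list. So the proof is essentially: combine the ``invariance iff each basis vector maps into the space'' reduction with Proposition~\ref{prop:inv}.

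The steps I would carry out, in order: first state that $MV\subseteq V$ iff $Mb^{(k)}\in V$ for all $k\in K_c$, justified by linearity and the spanning property of $B_c$; second, observe $Mb^{(k)}=w^{(k)}$ by definition; third, invoke Proposition~\ref{prop:inv} with $v=w^{(k)}$ to rewrite $w^{(k)}\in\Delta_{\mathcal{P}_c}$ as the three conditions; fourth, note that ranging over all $k\in K_c$ yields the claimed equivalence. One small point worth a remark is that it suffices to check $k\in K_c$ rather than all $k\in\mathbb{N}$, since $b^{(k)}$ is the zero vector for $k\notin K_c$ (noted after Proposition~\ref{prop:basis}) and $M\mathbf{0}=\mathbf{0}\in\Delta_{\mathcal{P}_c}$ trivially.

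There is no real obstacle here: the statement is labeled ``an easy consequence'' and the only thing to be careful about is making the linearity reduction explicit and citing Proposition~\ref{prop:inv} correctly. If anything, the mildly delicate point is recognizing that the ``for all distinct $i$ and $j$'' quantifier and the handling of the $c_i=0$ case are already packaged inside Proposition~\ref{prop:inv}, so no separate argument is needed for zero components beyond what that proposition provides.
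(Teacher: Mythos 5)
Your proposal is correct and matches the paper's intent exactly: the paper does not write out a proof, stating only that the result is ``an easy consequence of Proposition~\ref{prop:inv},'' and the argument you give---reduce invariance to $Mb^{(k)}\in\Delta_{\mathcal{P}_c}$ for the basis vectors $b^{(k)}$ ($k\in K_c$) via linearity and Proposition~\ref{prop:basis}, then apply Proposition~\ref{prop:inv} to $v=w^{(k)}$---is precisely the intended one-step derivation.
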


\begin{rem}
\label{rem:invMat}
In Proposition~\ref{prop:invMat}, the $k\in K_c$ condition can be replaced with $k\in\{1,\ldots, n-1\}$. This formulation is more suitable for the constraint satisfaction problem in Table~\ref{fig:CSP}. 
Since $b^{(k)}=0$ for $k\in\mathbb{N}\setminus K_c$, we can allow $k\in \{1,\ldots,n\}$ since $\{b^{(1)},\ldots,b^{(n)}\}$ is a spanning set. 
Note that $b^{(n)}$ is nonzero only if 
$c = (1, \ldots, n)$.
In this case
$\Delta_{\mathcal{P}_{c}}=\mathbb{R}^n$, so checking the conditions for $k=n$ is unnecessary as they are automatically satisfied.
\end{rem}

Note that the coloring vector of a synchrony subspace only has positive
components. So Conditions~(2) and (3) of Proposition~\ref{prop:invMat}
are automatically satisfied for synchrony subspaces.

\begin{rem}
The set of polydiagonal subspaces invariant under a matrix is a lattice
ordered by reverse inclusion \cite{Aguiar&Dias,AguiarDiasWeighted,AD2021}.
This ordering induces an ordering of the corresponding coloring vectors.
In this order $c^{(1)}\le c^{(2)}$ if and only if 
\begin{enumerate}
\item $c_{i}^{(1)}=c_{j}^{(1)}$ implies $c_{i}^{(2)}=c_{j}^{(2)}$;
\item $c_{i}^{(1)}=-c_{j}^{(1)}$ implies $c_{i}^{(2)}=-c_{j}^{(2)}$;
\item $c_{i}^{(1)}=0$ implies $c_{i}^{(2)}=0$
\end{enumerate}
hold for all distinct $i$ and $j$. Conditions (1)--(3) can be applied
even if $c^{(1)},c^{(2)}\in\mathbb{R}^{n}$. Using this extended order,
the conditions of Proposition~\ref{prop:invMat} can be simply stated
as $c\le w^{(k)}$ for all $k$.
\end{rem}

We often consider $M$ to be the adjacency or the Laplacian matrix of
a graph. 
However, any square matrix is the adjacency matrix of a digraph
with weighted edges, as seen in the next example \cite{NiSS}. 
\begin{example}
The Hasse diagram of the lattice of coloring vectors corresponding
to polydiagonal subspaces invariant under $M$ are shown in Figure~\ref{fig:example}.
The matrix $M$ is the in-adjacency matrix of the weighted digraph
shown. Note that $b^{(1)}=(1,0,1)$ and $b^{(2)}=(0,1,0)$ for $c=(1,2,1)$.
Hence $w^{(1)}=Mb^{(1)}=(2,0,2)\ge c$ and $w^{(2)}=Mb^{(2)}=(-1,-1,-1)\ge c$,
verifying that $\Delta_{\mathcal{P}_{c}}=\{(a,b,a)\mid a,b\in\mathbb{R}\}$
is invariant under $M$.

\begin{figure}[h]

\begin{tabular}{ccc}
\begin{tabular}{c}
\includegraphics[scale=1.2]{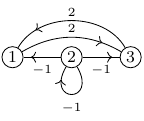}\\
\end{tabular} & %
\begin{tabular}{c}
$M=\left[\begin{matrix}0 & -1 & 2\\
0 & -1 & 0\\
2 & -1 & 0
\end{matrix}\right]$ \\
\end{tabular} & %
\begin{tabular}{c}
\begin{tikzpicture}[xscale=1.1,yscale=.8,inner sep=1pt]
\node (0) at (0,3) {$\scriptstyle(0,0,0)$};
\node (1) at (-1,2) {$\scriptstyle(1,0,-1)$};
\node (2) at (1,2) {$\scriptstyle(1,0,1)$};
\node[inner sep=1pt] (3) at (-1,1) {$\scriptstyle(1,0,2)$};
\node[inner sep=1pt,fill=green!25] (4) at (1,1) {$\scriptstyle(1,2,1)$}; 
\node[inner sep=1pt,fill=green!25] (5) at (0,0) {$\scriptstyle(1,2,3)$}; 
\draw (0)--(1);
\draw (0)--(2);
\draw (1)--(3);
\draw (2)--(3);
\draw (2)--(4);
\draw (3)--(5);
\draw (4)--(5);
\end{tikzpicture} \\
\end{tabular} \\
\end{tabular}

\caption{\label{fig:example}Weighted digraph with adjacency matrix and lattice
of coloring vectors of $M$-invariant polydiagonal subspaces. The
shading indicates the synchrony subspaces.}
\end{figure}
\end{example}

\section{Finding invariant polydiagonal subspaces using constraint programming}

\begin{figure}
\begin{framed}
\begin{flushleft}
$\bullet$ Decision variables: 
\vspace{-\baselineskip}
\[
c=(c_{1},\ldots,c_{n})\in\mathbb{Z}^{n}
\]

$\bullet$ Domains: 
\vspace{-4mm}
\[
-i+1\le c_{i}\le i \quad\text{for all \ensuremath{i}}
\tag{Rem. \ref{rem:redundant}}
\]
$\bullet$ Dependent variables:

maximums:
\vspace{-3.4mm}
\[
\begin{gathered}(m_2,\ldots,m_n)\in\mathbb{Z}^{n-1}\\
m_i:= \max\{c_{1},\ldots,c_{i-1}\} \quad\text{for all \ensuremath{i}} \end{gathered}
\]

spanning set: 
\vspace{-5mm}
\[
b^{(k)}=(b^{(k)}_{1},\ldots,b^{(k)}_{n})\in\mathbb{Z}^{n},
\quad\text{for $ k\in\{1,\ldots,n-1\}$}
\hspace{-1.2cm}
\tag{Rem. \ref{rem:invMat}}
\]
\[
b^{(k)}_{i}:=\begin{cases}
1, & c_{i}=k\\
-1, & c_{i}=-k\\
0, & \text{otherwise}
\end{cases}\qquad\text{for all \ensuremath{k} and \ensuremath{i}}
\hspace{-1cm}
\tag{Prop. \ref{prop:basis}}
\]

image of spanning set:
\vspace{-6.2mm}
\[
\hspace{9mm}
\begin{gathered}w^{(k)}=(w^{(k)}_{1},\ldots,w^{(k)}_{n})\in\mathbb{Z}^{n}\quad\text{for all $k$}\\
w^{(k)}:=Mb^{(k)}\quad\text{for all \ensuremath{k}}
\end{gathered}
\]
$\bullet$ Constraints:

coloring vector:
\vspace{-4.5mm}
\[
-m_i\le c_{i}\le1+m_i\quad\text{for \ensuremath{i\in\{2,\ldots,n\}}}
\hspace{-1cm}\tag{Def. \ref{def:colVec}(2)}
\]

$M$-invariance: 
\vspace{-4mm}
\[
\begin{aligned}\begin{gathered}c_{i}=0\end{gathered}
 & \implies w^{(k)}_{i}=0 &  & \text{for all \ensuremath{k} and \ensuremath{i}}\\
\begin{gathered}c_{i}=c_{j}\end{gathered}
 & \implies w^{(k)}_{i}=w^{(k)}_{j} &  &  \text{for all \ensuremath{k} and \ensuremath{j<i}}\\
\begin{gathered}c_{i}=-c_{j}\end{gathered}
 & \implies w^{(k)}_{i}=-w^{(k)}_{j} &  &  \text{for all \ensuremath{k} and \ensuremath{j<i}}
\end{aligned}
\hspace{-1.5cm}\tag{Prop. \ref{prop:invMat}}
\]
\end{flushleft}
\end{framed}

\caption{\label{fig:CSP} Constraint satisfaction problem for the coloring vectors $c$ of $M$-invariant polydiagonal subspaces for $M\in\mathbb{Z}^{n\times n}$.}
\end{figure}

The conditions of Definition~\ref{def:colVec}, Proposition~\ref{prop:inv},  Remark~\ref{rem:redundant}, and Proposition~\ref{prop:invMat} are well suited for constraint programming \cite{constraintProg}. Figure~\ref{fig:CSP} shows the constraint satisfaction problem for finding all invariant polydiagonal subspaces of a given matrix. 
Note that the invariant polydiagonal subspaces of $M$ and $aM$ are the same for any nonzero $a \in \mathbb{R}$. Thus, a matrix with rational entries can be replaced by an integer matrix in the computation of invariant subspaces. This requires multiplication by a common denominator of the entries. Henceforth our implementation assumes that $M$ has integer entries.

The decision variables are the components $c_{1},\ldots,c_{n}$ of the coloring vector $c$. The domains of the decision variables are determined by Remark~\ref{rem:redundant}. We also use the dependent (inferred) variables $m_2,\ldots,m_n$, $b^{(1)},\ldots,b^{(n-1)}$, and $w^{(1)},\ldots,w^{(n-1)}$. 
The subtle reasons for the range $k \in \{1, \ldots, n-1\}$ for $b^{(k)}$ rely on Remark~\ref{rem:invMat}. Since the $b^{(k)}$ are determined by decision variables during run time, even the zero vectors need to be defined. Also, in order to make the algorithm more efficient, $b^{(n)}$ is not needed, hence not defined.  


%
%

We developed code that solves this constraint satisfaction problem using four Python interfaces to three different solvers. Two of them are available in the appendices. All four programs are available at our companion website \cite{NSS10_companion}.

In these programs, the dependent variables can be actual variables or expressions, depending on the constraint solver. After a solver finds all possible coloring vectors, the corresponding invariant polydiagonal spaces can be built using Proposition~\ref{prop:c_to_Pc}. 

The best performing version for smaller problems uses DOcplex, the Python interface to IBM's CP Optimizer \cite{cp_optimizer_2023}. The versions using the CP-SAT solver \cite{cpsatlp} handle larger or more complex problems better. We use the CP-SAT solver through two interfaces. One of them uses the native ORtools interface, the other uses the CPMpy \cite{guns2019increasing} interface. The ORtools version performs better if we use the conditions of Proposition~\ref{def:colVecAlt}. The version using the Z3 Theorem Prover \cite{z3} is significantly slower. This is probably because Z3 is designed to solve more generic problems. We include the Z3 version because it is easily accessible, user friendly, and has a non-commercial license.

As an initial test, we ran our code without the constraints that force $M$-invariance, which we can think of as letting $M$ be the zero matrix. This finds coloring vectors in $\mathbb{Z}^n$. The number of such coloring vectors is the Dowling number $p_n$ \cite[A007405]{oeis} as was shown in \cite{NiSS}. Our code successfully verifies these counts. Table~\ref{tab:performcol} shows the performance of our implementations. These values show that the performance of the Z3 code is inferior even for this simpler problem compared to the other two implementations.

\begin{table}[ht]
\begin{tabular}{|l|c|c|c|c|c|c|c|c|c|c|}
\hline 
$n$ & 1 & 2 & 3 & 4 & 5 & 6 & 7 & 8 & 9 & 10  \\
\hline 
$p_n$ & 2 & 6 & 24 & 116 & 648 & 4088 & 28640 & 219920 & 1832224 & 16430176  \\
\hline 
DOcplex & 0.4 & 0.4 & 0.4 & 0.4 & 0.5 & 1.5 & 8.1 & 61 & 501 & 4531   \\
\hline 
ORtools & 0.4 & 0.3 & 0.3 & 0.3 & 0.4 & 1.0 & 6.1 & 57 & 586 & 6427   \\
\hline 
Z3 & 0.3 & 0.3 & 0.3 & 0.4 & 1.1 & 12 & 532 & 47513 &  &    \\
\hline
\end{tabular}

\caption{\label{tab:performcol}
The number $p_n$ of coloring vectors in $\mathbb{Z}^n$, and the time in seconds to compute them with three different solvers. Since the ORtools interface to CP-SAT outperforms the CPMpy interface, the CPMpy times are omitted from the table.
}
\end{table}

\section{Comparison to existing approaches}

No algorithm, other than brute force, is currently available to compute all invariant polydiagonal subspaces. Our constraint programming approach is the first practical way to handle medium-size matrices.  
 
The first useful algorithm to compute only the invariant synchrony subspaces was developed in \cite{KameiLattice} and further improved in \cite{NSS7} as the split and cir algorithm. These algorithms rely on the computation of the coarsest invariant refinement (cir) of a partition. The synchrony subspace corresponding to the cir is the invariant synchrony subspace with the smallest dimension that contains the original synchrony subspace.

The split and cir algorithm starts with the singleton partition $\{\{1,\ldots,n\}\}$, corresponding to the coloring vector $c = (1,\ldots,1)$. 
The one-dimensional subspace $\Delta_{\mathcal{P}_c}$ is not necessarily invariant. The queue of candidates is initialized with this partition. For each candidate in the queue, the cir is computed and added to the list of partitions producing invariant synchrony subspaces.  For each new invariant partition, each class is split into two classes in all possible ways, which are added to the candidate queue. The queue will eventually be empty, terminating the algorithm.

Our constraint programming code greatly outperforms the split and cir algorithm. A theoretical algorithm for computing invariant polydiagonal subspaces using eigenvectors of the matrix $M$ is described in \cite{Aguiar&Dias,AD2021}.
Since no implementation is available, it is hard to compare the performance of this approach to ours. One advantage of our approach is that it uses exact arithmetic. 

Table~\ref{tab:perform} shows how the performance 
of the split and cir algorithm
\cite{NSS7} compares to the constraint approach for the Laplacian
matrix of the cycle graph $C_{n}$. 
Our computations were done on a AMD EPYC 7542 processor using a single  
2.9 GHz CPU and 16G RAM. 
In many instances, the use of constraint programming reduces the time of computation from weeks to seconds.

\begin{table}[ht]
\begin{tabular}{|l|c|c|c|c|c|c|c|c|c|c|}
\hline 
$n$ & 5 & 10 & 15 & 20 & 25 & 30 & 35 & 40 & 45 & 50 \\
\hline 
\multicolumn{1}{l}{number of subspaces} & \multicolumn{1}{c}{} & \multicolumn{1}{c}{} & \multicolumn{1}{c}{} & \multicolumn{1}{c}{} & \multicolumn{1}{c}{} & \multicolumn{1}{c}{} & \multicolumn{1}{c}{} & \multicolumn{1}{c}{} & \multicolumn{1}{c}{} & \multicolumn{1}{c}{} \\
\hline 
synchrony & 7 & 19 & 27 & 45 & 33 & 77 & 51 & 95 & 83 & 96 \\
\hline 
polydiagonal & 13 & 47 & 51 & 121 & 64 & 197 & 99 & 269 & 161 & 250 \\
\hline 
\multicolumn{1}{l}{times (sec)} & \multicolumn{1}{c}{} & \multicolumn{1}{c}{} & \multicolumn{1}{c}{} & \multicolumn{1}{c}{} & \multicolumn{1}{c}{} & \multicolumn{1}{c}{} & \multicolumn{1}{c}{} & \multicolumn{1}{c}{} & \multicolumn{1}{c}{} & \multicolumn{1}{c}{} \\
\hline 
split \& cir: sync. & 0 & 0 & 0 & 9 & 401 & 22631 &  &  &  & \\
\hline 
DOcplex: sync. & 0.4 & 0.4 & 0.5 & 0.7 & 1.2 & 2.1 & 3.3 & 5.4 & 8.0 & 12\\
DOcplex: polydiag. & 0.4 & 0.4 & 0.6 & 1.2 & 2.4 & 5.5 & 11 & 26 & 57 & 124\\
\hline 
ORtools: sync. & 0.4 & 1.5 & 6.1 & 11 & 18 & 34 & 54 & 107 & 195 & 333\\
ORtools: polydiag. & 0.5 & 2.3 & 7.9 & 19 & 38 & 91 & 164 & 348 & 613 & 1051\\
\hline 
\end{tabular}

\caption{\label{tab:perform}
Times to compute the synchrony and polydiagonal subspaces invariant under the Laplacian of $C_{n}$. Only synchrony subspaces are computed with the split and cir algorithm.}
\end{table}

Polydiagonal subspaces that are invariant under a collection of matrices
are also often useful as was shown in \cite{NSS7}. The constraint
programming approach for finding invariant polydiagonal subspaces
easily generalizes to this setting. It only requires
the constraints corresponding to the conditions of Proposition~\ref{prop:invMat}
for each matrix in the collection.

\section{Examples}

Let $\Aut(G)$ be the automorphism group and $M$ be the adjacency
matrix of the simple graph $G$. Define $\Gamma:=\Aut(G)\times\mathbb{Z}^{*}$,
where and $\mathbb{Z}^{*}=\{-1,1\}$ is the unit group of $\mathbb{Z}$.
For $(\phi,\sigma)\in\Gamma$ and $x\in\mathbb{R}^{n}$, define $(\phi,\sigma)x:=(\sigma x_{\phi(1)},\ldots,\sigma x_{\phi(n)})$.
Then $\Gamma$ acts on the set of $M$-invariant polydiagonal subspaces
by $(\phi,\sigma)V:=\{(\phi,\sigma)x\mid x\in V\}$. The point stabilizer
of such a subspace $V$ is 
\[
\pStab(V):=\{(\phi,\sigma)\in\Gamma\mid(\forall x\in V)\,(\phi,\gamma)x=x\}.
\]
The fixed point subspace $\Fix(\Theta)$ of a subgroup $\Theta$ of
$\Gamma$ is the maximal $M$-invariant subspace whose point stabilizer
is $\Theta$. We call $\Fix(\pStab(V))$ the fixed point subspace
corresponding to $V$. If $\Fix(\pStab(V))\ne V$, then $V$ is an
\emph{Anomalous Invariant Subspace} (AIS). In other words, an AIS
is an $M$-invariant polydiagonal subspace that is not a fixed point subspace of
the $\Gamma$ action. Although fixed point subspaces can be found
by analyzing representations of the $\Gamma$ action \cite{NSS3},
finding each AIS is very difficult without constraint programming. 
\begin{example}
Our DOcplex and ORtools codes both find the 240 invariant polydiagonal subspaces
for the adjacency matrix of the Petersen graph in a few seconds. 
These subspaces are in 22
orbits under the $\Gamma$ action. The number of invariant anti-synchrony
subspaces is 147. Two orbits contain AIS.
\end{example}

\begin{example}
The invariant polydiagonal subspaces for the graph Laplacian $L$ for the path graph with 100 vertices can be computed by our DOcplex and ORtools codes in 7.4 and 31.9 minutes, respectively. There are 20 $L$-invariant subspaces, each in a singleton group orbit of the $\Gamma \cong \mathbb{Z}_2 \times \mathbb{Z}_2$ action. Only 4 of these invariant subspaces are fixed point subspaces, with coloring vectors
\[
(0, \ldots, 0), \quad
(1, \ldots, 50, -50, \ldots, -1), \quad 
(1, \ldots,50, 50, \ldots,  1), \quad
(1, \ldots, 100) .
\]
Two of the invariant subspaces, for example, that are AIS have the coloring vectors
\[
(1, -1, -1, 1, \ldots , 1,-1, -1, 1), \quad
(1, 2, 2, 1, \ldots , 1, 2, 2, 1),
\]
where the groups of four are repeated.

These results could be reproduced using the eigenvectors of $L$, using the algorithm described in \cite{Aguiar&Dias,AD2021}.  
This can be done because the eigenvectors $v_k\in \mathbb{R}^{100}$ for $k \in \{0, \ldots, 99\}$ are known in closed form with components $(v_k)_i = \cos((i-\frac 1 2) k \pi/100)$. 
\end{example}


\begin{figure}
\includegraphics[scale=0.35]{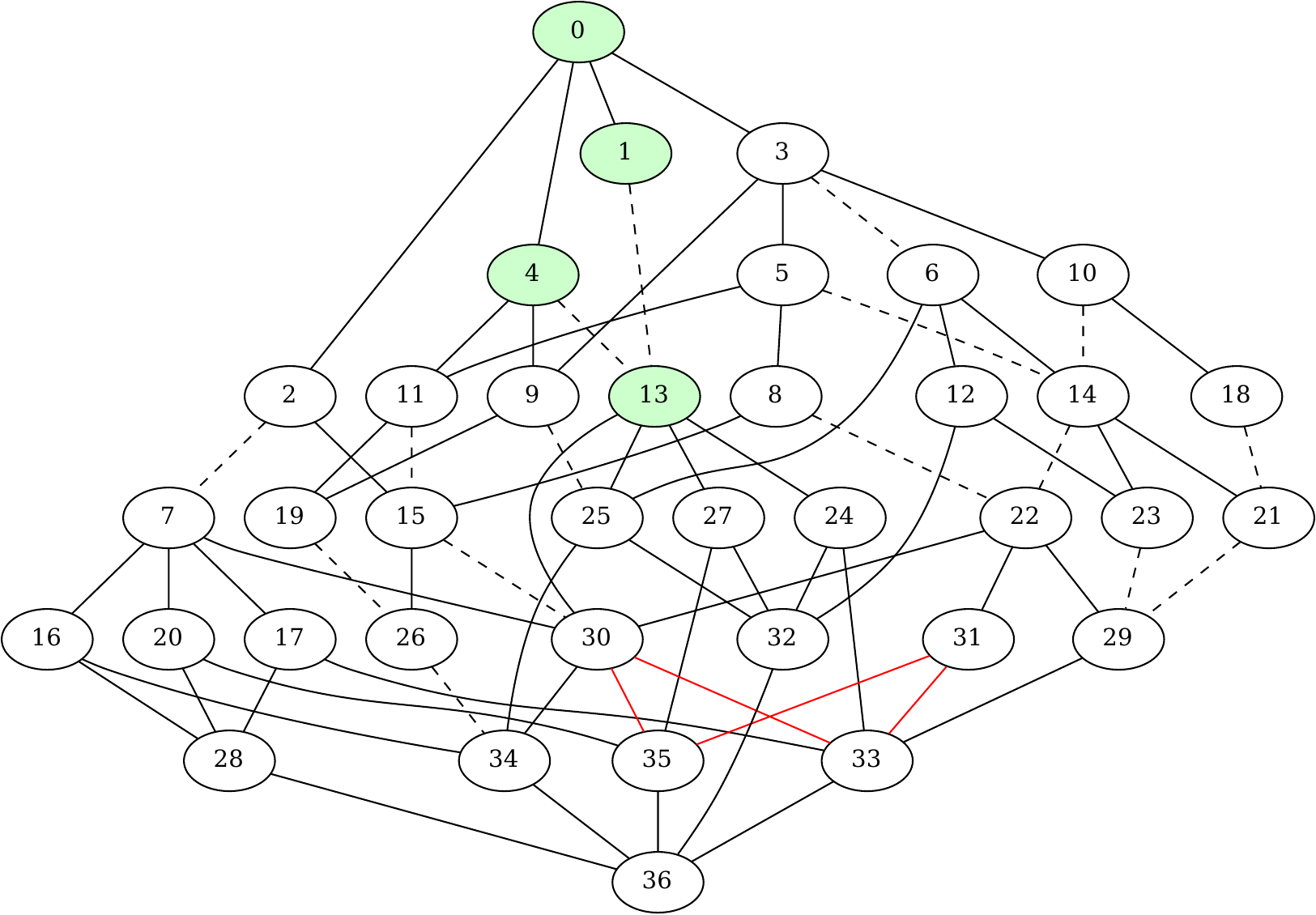}

\caption{\label{fig:BuckyHasse}The Hasse diagram of the poset of $\Gamma$-orbits of $M$-invariant synchrony subspaces for the Buckyball graph. The shaded orbits are described in Figures~\ref{fig:w1012} 
through \ref{fig:1012subs}. The relationship between orbits 30, 31, 33, and 35, as indicated by the red figure eight, shows that this poset is not a lattice.}
\end{figure}

\begin{figure}
\setlength{\tabcolsep}{2.5mm}
\begin{tabular}{cc}
\begin{tabular}{c}
\includegraphics[scale=1]{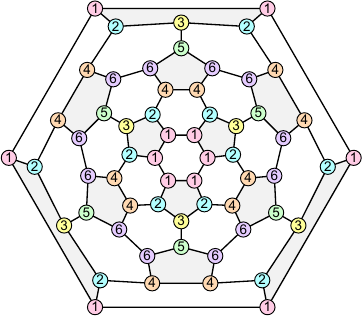}\\
\end{tabular} & %
\begin{tabular}{c}
\includegraphics[scale=0.135]{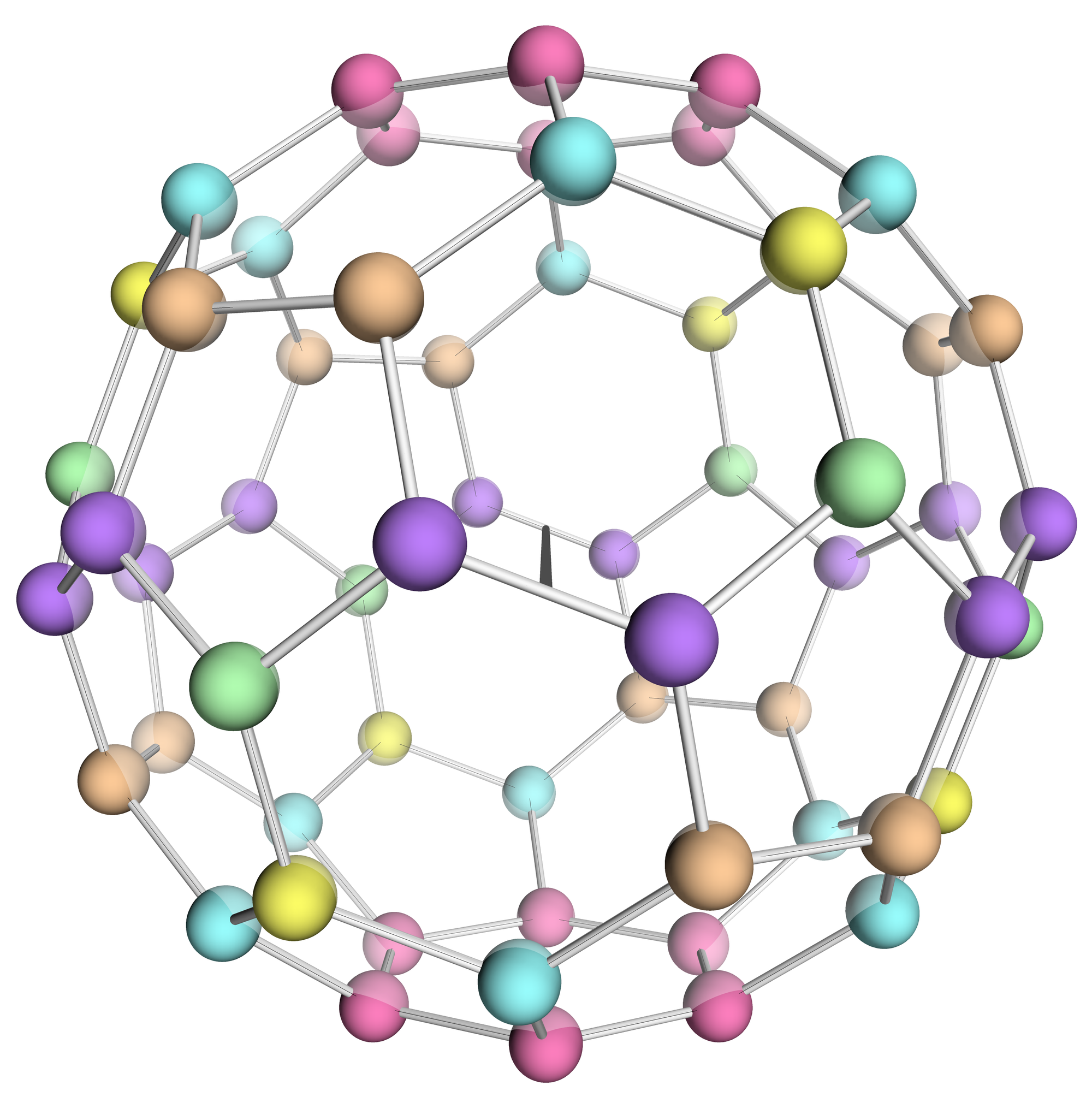}\\
\end{tabular}\\
\end{tabular}

\caption{\label{fig:w1012}The invariant synchrony subspace $W$ of Example~\ref{exa:Bucky}.
The 2D embedding uses an azimuthal equidistant projection of the 3D
embedding. The fixed point subspace $W$ has point stabilizer isomorphic
to $D_{3d}\protect\cong D_{3}\times\mathbb{Z}_{2}$. The $D_{3}$ symmetry
can be seen with the help of the shading in the 2D picture of $W$,
while the $\mathbb{Z}_{2}$ symmetry is the $180^{\circ}$ rotation
around the black axis shown in the 3D embedding of $W$. }
\end{figure}

\begin{figure}
\setlength{\tabcolsep}{1.5pt}
\begin{tabular}{ccc}
\begin{tabular}{c}
\includegraphics[scale=1]{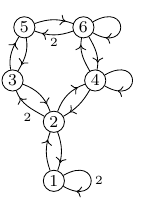}\\
\end{tabular} & %
\begin{tabular}{c}
$M_{c}=\left [ 
\begin{matrix}
2 & 1 & 0 & 0 & 0 & 0\\
1 & 0 & 1 & 1 & 0 & 0\\
0 & 2 & 0 & 0 & 1 & 0\\
0 & 1 & 0 & 1 & 0 & 1\\
0 & 0 & 1 & 0 & 0 & 2\\
0 & 0 & 0 & 1 & 1 & 1
\end{matrix} 
\right ]$\\
\end{tabular} & %
\begin{tabular}{c}
\begin{tikzpicture}[xscale=2.5,yscale=.9,inner sep=1pt]
\node (0) at (1,3) {$\scriptstyle(0,0,0,0,0,0)$};
\node (1) at (0,2) {$\scriptstyle(0,0,1,-1,1,0)$};
\node (2) at (1,2) {$\scriptstyle(1,-1,-1,-1,1,1)$};
\node[inner sep=1pt,fill=green!25] (3) at (2,2) {$\scriptstyle(1,1,1,1,1,1)$};
\node[inner sep=1pt,fill=green!25] (4) at (1,1) {$\scriptstyle(1,2,2,2,1,1)$};
\node[inner sep=1pt,fill=green!25] (5) at (2,1) {$\scriptstyle(1,2,3,1,2,1)$};
\node[inner sep=1pt,fill=green!25] (6) at (1,0) {$\scriptstyle(1,2,3,4,5,6)$};
\draw (0)--(1);
\draw (0)--(2);
\draw (0)--(3);
\draw[dashed] (1)--(6);
\draw[dashed] (2)--(4);
\draw[dashed] (3)--(4);
\draw[dashed][dashed][dashed] (3)--(5);
\draw[dashed][dashed] (4)--(6);
\draw[dashed] (5)--(6);
\end{tikzpicture}\\
\end{tabular}\\
\end{tabular}

\caption{\label{fig:QuotientDigraph}
The quotient digraph $G_c$ of $W$ from Figure~\ref{fig:w1012} with its
adjacency matrix $M_{c}$ and lattice of coloring vectors of $M_{c}$-invariant
polydiagonal subspaces. The four shaded coloring vectors of synchrony
subspaces correspond to the shaded orbits of Figure~\ref{fig:BuckyHasse}.
}
\end{figure}


\begin{figure}
\setlength{\tabcolsep}{5mm}
\begin{tabular}{cc}
\includegraphics[scale=1]{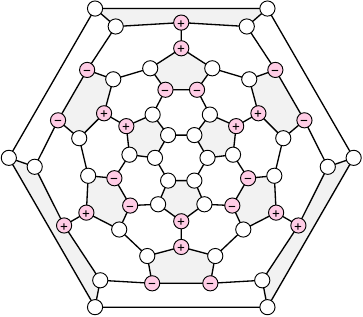} & \includegraphics[scale=1]{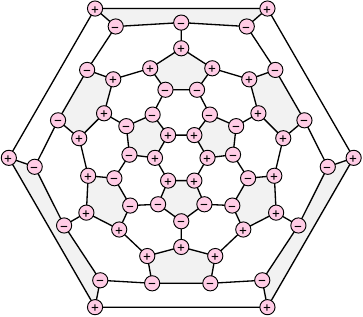}\\
$(0,0,1,-1,1,0)$ & $(1,-1,-1,-1,1,1)$\\
\vspace{-.1cm} \\ 
\includegraphics[scale=1]{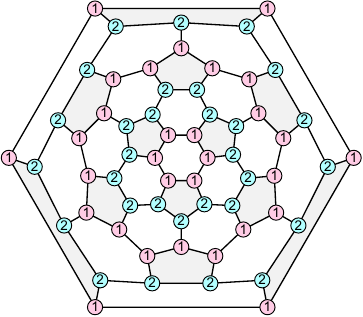} & \includegraphics[scale=1]{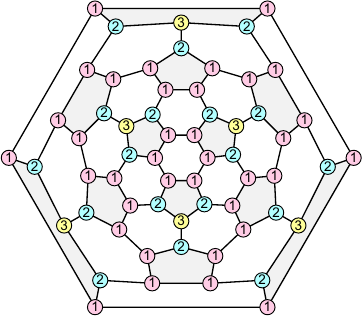}\\
$(1,2,2,2,1,1)$ & $(1,2,3,1,2,1)$\\
\end{tabular}

\caption{\label{fig:1012subs}The $M$-invariant AIS that have the same point
stabilizer as $W$. The coloring vectors below the figures indicate
the corresponding $M_{c}$-invariant polydiagonal subspaces shown
in Figure~\ref{fig:QuotientDigraph}.}
\end{figure}

\begin{example}
\label{exa:Bucky}The 60-vertex Buckyball graph $G$ models the Buckminsterfullerene
molecule Carbon 60. The standard embedding of the Buckyball graph
into 3-space is the skeleton of the truncated icosahedron, familiar
as the soccer ball. The symmetry group of this embedding is the 120-element
icosahedral group $I_{h}$ using Sch\"onflies notation \cite{Tinkham}.
Note that $I_{h}$ is a subgroup of the orthogonal group $O(3)$,
and $I_{h}\cong\Aut(G)\cong A_{5}\times\mathbb{Z}_{2}$. Thus, $\Gamma=\Aut(G)\times\mathbb{Z}^{*}$
has 240 elements. Let $M$ be the adjacency matrix of $G$. Our ORtools
code finds the 1244 $M$-invariant polydiagonal subspaces in 6.7 hours. The DOcplex implementation failed to find all solutions in 14 days.
These polydiagonal subspaces are partitioned into 95 group
orbits under the $\Gamma$ action. The same code finds the 340 $M$-invariant
synchrony subspaces, partitioned into 37 group orbits, in under 3
minutes. The partially ordered set of $M$-invariant synchrony subspaces
is shown in Figure~\ref{fig:BuckyHasse}. Orbits 0 and 36 are both
singletons; orbit 0 contains the fully synchronous subspace with coloring
vector $(1,\ldots,1)$, and orbit 36 contains $\mathbb{R}^{60}$ with
the coloring vector $(1,\ldots,60)$. 

Orbits connected by a dashed line in Figure~\ref{fig:BuckyHasse}
have the same conjugacy class of point stabilizers. The 16 group orbits
with a dashed line pointing down contain synchrony subspaces that
are AIS, and the other 21 group orbits contain synchrony subspaces
that are fixed point subspaces. 

Now we focus on the synchrony subspace $W$ shown in Figure~\ref{fig:w1012},
which is a fixed point subspace and a member of orbit 13 of Figure~\ref{fig:BuckyHasse}.
The quotient digraph $G_c$ of $W$, where $c$ is the coloring vector of $W$, is shown in Figure~\ref{fig:QuotientDigraph}. An arrow $i\!\!\shortrightarrow\!\!j$ with weight $a$ in the quotient
digraph indicates that every vertex with color $j$ is connected to
$a$ vertices with color $i$. The adjacency matrix of the quotient
digraph is $M_{c}:=D_{c}^{+}MD_{c}\in\mathbb{R}^{6\times 6}$ \cite{nss8}. Note that the pseudoinverse $D_{c}^{+}=(D_{c}^{T}D_{c})^{-1}D_{c}^{T}$ is easy to compute because $D_{c}^{T}D_{c}$ is diagonal. The quotient digraph, the matrix $M_{c}$, and the lattice of the six $M_{c}$-invariant subspaces are shown in Figure~\ref{fig:QuotientDigraph}. 
The automorphism group of the quotient digraph is trivial, and
the only fixed point subspaces of the $\Aut(G_c)\times\mathbb{Z}^*\cong \mathbb{Z}_2$ action on $\mathbb{R}^6$ are $(0, \ldots, 0)$ and $(1, \ldots, 6).$ This lattice is isomorphic to the lattice of $M$-invariant polydiagonal subspaces that are also subspaces of $W$. Figure~\ref{fig:1012subs} shows the four of these subspaces that are AIS.
The remaining three of these subspaces, the trivial subspace, the constant subspace, and $W$ itself, are fixed point subspaces.
\end{example}

\section{Conclusions}
Polydiagonal (synchrony or anti-synchrony) subspaces invariant under a matrix have many applications in network theory and dynamical systems, especially coupled cell networks. 
We have developed an approach based on coloring vectors which makes it possible to formulate constraint satisfaction problems for finding invariant polydiagonal subspaces. We have shown that solving the resulting problems with existing state-of-the-art constraint solvers greatly outperforms split and cir, the best previously known 
algorithm for computing synchrony subspaces. 
Additionally, we are now able to compute all anti-synchrony subspaces.  
The speedup of our new code is remarkable.  For example, a problem that took weeks for the split and cir code can now be done in seconds.

The constraint satisfaction problem of Figure~\ref{fig:CSP} was translated into programs that use Python interfaces to several solvers.  
Our four Python programs are available at the GitHub repository~\cite{NSS10_companion}, including the two listings featured in the appendices.

In our examples, we enforced mathematically equivalent constraints via a variety of methods.
We used different constraint solvers and choices of hyper-parameters.
We tested our implementations on problems of different sizes and complexity.
Based on our experiments, we make the following recommendations for finding invariant polydiagonal subspaces 
with our code found at GitHub.
\begin{itemize}
\item
{\bf ORtools:}
The native Python interface to CP-SAT in ORtools (Google) \cite{cpsatlp} is the best choice for complex problems,
but has the most difficult syntax.
\item
{\bf CPMpy:}
If modification of the code is desired, then use the CPMpy \cite{guns2019increasing} interface to CP-SAT.
It is easier to understand and executes CP-SAT
nearly as fast as the native ORtools interface.
This interface can also access other solvers.
\item
{\bf DOcplex:}
For relatively simple problems the DOcplex (IBM) \cite{cp_optimizer_2023} code is generally fastest. 
It also has a user-friendly syntax, 
but may not handle more complex problems.
The use of DOcplex requires the installation of CP Optimizer. A license is required but it is free for academic users.
\item
{\bf Z3:}
The Z3 Theorem Prover (Microsoft) \cite{z3} code is easy to understand, but the performance does not match the other solvers.
\end{itemize}

The ability to find polydiagonal subspaces that are invariant under large matrices opens up exciting possibilities 
for studying the dynamics of complex coupled cell networks. 
For example, our buckyball results have implications for understanding the vibrations of the Carbon 60 molecule.
An extension of the current paper would be the computation of polydiagonal subspaces 
that are invariant under a set of matrices \cite{NSS7}.
This computation would be useful in many applications.

The performance of our new code suggests that other combinatorially difficult problems in network theory could be solved using constraint programming.

\section*{Statements and Declarations}
Some of the computations were performed on Northern Arizona University's Monsoon computing cluster, funded by Arizona's Technology and Research Initiative Fund.

\bibliographystyle{plain}
\bibliography{bib}

\pagebreak

\section*{Appendix A}
\begin{figure}[h]

\begin{lstlisting}[numbers=left,numberstyle={\footnotesize},basicstyle={\footnotesize\ttfamily},frame=single]
from docplex.cp.model import CpoModel

# read integer matrix from file
Mat=[[int(x) for x in r.split()] for r in open('M.txt') if r.strip()]
n=len(Mat)

mdl=CpoModel(name='polydiagonal')
# coloring vector decision variables with domains
c=[mdl.integer_var(min=-i,max=i+1,name=f"c_{i}") for i in range(n)]
# maximums
m={i:mdl.max([c[j] for j in range(i)]) for i in range(1,n)}
# spanning set
b={k:[(c[i]==k)-(c[i]==-k) for i in range(n)] for k in range(1,n)}
# image of spanning set
w={k:[mdl.sum(Mat[i][j]*b[k][j] for j in range(n) if Mat[i][j]!=0) 
                                for i in range(n)] for k in b}
# constraints
# coloring vector
for i in m:
  mdl.add(-m[i]<=c[i],c[i]<=1+m[i])
# M-invariance
for k in b:
  for i in range(n):
    mdl.add(mdl.if_then(c[i]==0,w[k][i]==0))
    for j in range(i):
      mdl.add(mdl.if_then(c[i]== c[j],w[k][i]== w[k][j]))
      mdl.add(mdl.if_then(c[i]==-c[j],w[k][i]==-w[k][j]))

# find all solutions
siter=mdl.start_search(SearchType='DepthFirst',Workers=1,
                       LogVerbosity='Quiet')
for sol in siter:
  print(*[sol[c[i]] for i in range(n)])
\end{lstlisting}
\caption{
\label{code:CP}
DOcplex code for computing coloring vectors of $M$-invariant polydiagonal subspaces.}
\end{figure}

Figure~\ref{code:CP} lists our DOcplex \cite{cp_optimizer_2023} code for solving the constraint satisfaction problem in Figure~\ref{fig:CSP}. The code is available at \cite{NSS10_companion}. DOcplex is a Python interface to the CP Optimizer. The code reads in the integer matrix $M$ from the file \texttt{M.txt} and writes the coloring vectors of all of the $M$-invariant polydiagonal subspaces to standard output. Since the indices of a Python list start at 0, the components of $c$ are indexed by $i \in  \{0, \ldots, n-1\}$. Every dependent variable is encoded as an expression.
In the expressions for $w^{(1)},\ldots,w^{(n-1)}$ we avoid zero multiples of a variable. Since $M$ is often a sparse matrix, this simplifies our constraints. Since our goal is not only finding a single solution but finding all of them, we modify the default hyper-parameters by selecting the `DepthFirst' search type and using only one worker. We keep the default presolver, since it results in faster execution. 

\newpage

\section*{Appendix B}

\begin{figure}[h]
\begin{lstlisting}[numbers=left,numberstyle={\footnotesize},basicstyle={\footnotesize\ttfamily},frame=single]
from cpmpy import *

# read integer matrix from file
Mat=[[int(x) for x in r.split()] for r in open('M.txt') if r.strip()]
n=len(Mat)

mdl=Model()
# coloring vector decision variables with domains
c=[intvar(-i,i+1,name=f"c_{i}") for i in range(n)]
# spanning set
b={k:[intvar(-1,1,name=f"b_{k}_{i}") for i in range(n)] 
   for k in range(1,n)}
for k in b:
  for i in range(n):
    mdl+=(c[i]== k).implies(b[k][i]== 1)
    mdl+=(c[i]==-k).implies(b[k][i]==-1)
    mdl+=((c[i]!=k) & (c[i]!=-k)).implies(b[k][i]==0)
# image of spanning set
w={k:[sum(Mat[i][j]*b[k][j] for j in range(n) if Mat[i][j]!=0)
   for i in range(n)] for k in b}

# constraints
# coloring vector
for i in range(1,n):
    mdl+=any([c[i]<=1+c[j] for j in range(i)])
    mdl+=any([c[i]>=0]+[c[i]==-c[j] for j in range(i)])
# M-invariance
for i in range(n):
  mdl+=(c[i]==0).implies(all([w[k][i]==0 for k in b]))
  for j in range(i):
    mdl+=(c[i]== c[j]).implies(all([w[k][i]== w[k][j] for k in b]))
    mdl+=(c[i]==-c[j]).implies(all([w[k][i]==-w[k][j] for k in b]))

# find all solutions
mdl.solveAll(display=lambda: print(*[a.value() for a in c]))
\end{lstlisting}
\caption{
\label{code:CPMpy}
CPMpy code for computing coloring vectors of $M$-invariant polydiagonal subspaces.}
\end{figure}

Figure~\ref{code:CPMpy} lists our CPMpy \cite{guns2019increasing} code for solving the constraint satisfaction problem in Figure~\ref{fig:CSP}. The code is available at \cite{NSS10_companion}. CPMpy is a Python library with access to several solvers. The default solver is CP-SAT. Although this CPMpy code does not reach the speed of similar code using the ORtools interface, the syntax of CPMpy is significantly simpler. It is likely that the speed difference can be eliminated by solver parameter tuning. The native ORtools code is also available at \cite{NSS10_companion}.

In this code, we use Proposition~\ref{def:colVecAlt} to enforce that $c$ is a coloring vector. As a result, the dependent variables $m_2,\ldots,m_n$ are not computed. The coloring vector conditions are encoded using logical or-statements. We made this choice because experimentation showed that this encoding performs better using the CP-SAT solver.

The dependent variables $w^{(1)},\ldots,w^{(n-1)}$ are also encoded using expressions instead of actual variables. As in the DOcplex code, in these expressions we avoid zero multiples of a variable. 

In this code we handle the $k$-dependence of the constraints enforcing the $M$-invariance in a different but logically equivalent formulation to that in Figure~\ref{fig:CSP}. Embedding the $k$-dependence within the constraints results in fewer but more complex constraints compared to the DOcplex code. Experiments show that this results in a significant performance improvement. We believe that the reason for this is that we do not utilize the CP-SAT presolver. Experiments show that the presolver slows down the computation for our problem.
\\

\end{document}